\newtheorem{thm}{Theorem}[section]
\newtheorem{lem}[thm]{Lemma}
\theoremstyle{definition}
\newtheorem{defn}[thm]{Definition}
\theoremstyle{remark}
\theoremstyle{conclusion}
\numberwithin{equation}{section}
\begin{document}
\title[Continuous dependence for $H^{2}$ critical NLS]
{ Continuous dependence for $H^{2}$ critical nonlinear Schr\"{o}dinger equations in high dimensions }

\author{Wei Dai}

\address{Institute of Applied Mathematics, AMSS, Chinese Academy of Sciences, Beijing 100190, P. R. China}
\email{daiwei@amss.ac.cn}

%\thanks{}
\begin{abstract}
The global existence of solutions in $H^{2}$ is well known for $H^{2}$ critical nonlinear Schr\"{o}dinger equations with small initial data in high dimensions $d\geq8$(see \cite{C-W}). However, even though the solution is constructed by a fixed-point technique, continuous dependence in $H^{2}$ does not follow from the contraction mapping argument. Comparing with the low dimension cases $4<d<8$, there is an obstruction to this approach because of the sub-quadratic nature of the nonlinearity(which makes the derivative of the nonlinearity non-Lipschitz). In this paper, we resolve this difficulty by applying exotic Strichartz spaces of lower order instead and show that the solution depends continuously on the initial value in the sense that the local flow is continuous $H^{2}\rightarrow H^{2}$.
\end{abstract}
\maketitle{\small {\bf Keywords:} Nonlinear Schr\"{o}dinger equation; Continuous dependence; Cauchy problem; Strichartz's estimates. \\

{\bf 2010 MSC} Primary: 35Q55; Secondary: 35B30, 46E35.}

\section{INTRODUCTION}

In this paper, we study the Cauchy problem for the following nonlinear Schr\"{o}dinger equation
\begin{equation}\label{eq1}
\left\{
  \begin{array}{ll}
    i\partial_{t}u+ \Delta u=g(u), \,\,\, t\in \mathbb{R}, \,\, x\in \mathbb{R}^{d},\\
    u(0,x)=\phi(x)\in H^{2}(\mathbb{R}^{d}), \,\,\, x\in \mathbb{R}^{d},
  \end{array}
\right.
\end{equation}
where the spatial dimension $d\geq8$, the $H^{2}$-critical nonlinearity $g$ satisfies
\begin{equation}\label{nonlinearity0}
    g\in C^{1}(\mathbb{C},\mathbb{C}), \,\,\,\,\, g(0)=0,
\end{equation}
and obeys the power-type estimates
\begin{equation}\label{nonlinearity1}
    g_{z}(u), \,\, g_{\bar{z}}(u)=O(|u|^{\frac{4}{d-4}}),
\end{equation}
\begin{equation}\label{nonlinearity2}
    g_{z}(u)-g_{z}(v), \,\, g_{\bar{z}}(u)-g_{\bar{z}}(v)=O(|u-v|^{\frac{4}{d-4}}),
\end{equation}
where $O$ denotes the Landau's symbol and $g_{z}$, $g_{\bar{z}}$ are the usual complex derivatives be defined under the identification $\mathbb{C}=\mathbb{R}^{2}$, that is,
\[g_{z}:=\frac{1}{2}(\frac{\partial g}{\partial x}-i\frac{\partial g}{\partial y}), \,\,\,\, g_{\bar{z}}:=\frac{1}{2}(\frac{\partial g}{\partial x}+i\frac{\partial g}{\partial y}).\]

The Cauchy problem (\ref{eq1}) in the Sobolev space $H^{2}(\mathbb{R}^{d})$ has been quite extensively studied(see \cite{B-G,C-D-Y,C-W,C,K1,K2,K3,P,T3}). The local well-posedness for IVP (\ref{eq1}) was established in \cite{C-W,C,K3} in both $H^{2}$ subcritical and critical cases by using spatial derivatives, in which we would need a further assumption $\frac{4}{d-4}>1$. The reason for the additional restriction on spatial dimension $d$ is that obtaining $H^{2}$ estimates by differentiating twice the equation in space would require that the nonlinearity $g$ is sufficiently smooth at the origin. Nevertheless, we may as well differentiate the equation once in time, and then deduce $H^{2}$ estimates by the equation. Based on this idea which goes back to \cite{K1}, the authors were able to construct solutions to (\ref{eq1}) in $H^{2}$-subcritical cases by using time derivatives with minimal regularity assumptions on the nonlinearity $g$ in \cite{C-W,C,K1,K2,P,T3}. In the $H^{2}$-critical case, global well-posedness for IVP (\ref{eq1}) in arbitrary dimension $d>4$ was also obtained in \cite{C-W,P} provided $\|\phi\|_{\dot{H}^{2}}$ is sufficiently small. As for the continuity of the solution map $\phi\mapsto u$ for (\ref{eq1}), it's well known that we can deduce from contraction mapping arguments and interpolation inequalities that the local solution flow is continuous $H^{s}\rightarrow H^{s}$ for any $s<2$(see \cite{C-W,C,K3}). But it is weaker than the expected one, that is, continuous dependence on initial data in $H^{2}(\mathbb{R}^{d})$. To our best knowledge of previous results, continuous dependence in $H^{2}(\mathbb{R}^{d})$ was proved in \cite{C-D-Y,K1} for the subcritical case and in \cite{C-D-Y} for the critical case and low spatial dimensions $d<8$.

In this paper, by using exotic Strichartz spaces equipped with lower fractional order time derivatives, we show that continuous dependence holds in $H^{2}(\mathbb{R}^{d})$ in the standard sense for the $H^{2}$-critical Cauchy problem (\ref{eq1}) in high dimensions $8\leq d<12$. More precisely, our main result is the following.

\begin{thm}\label{th1}
Assume $d\geq8$ and the nonlinearity $g$ satisfies conditions \eqref{nonlinearity0}-\eqref{nonlinearity2}. If initial value $\phi\in H^{2}(\mathbb{R}^{d})$ with $\|\Delta \phi\|_{L^{2}(\mathbb{R}^{d})}$ sufficiently small, then by Theorem 1.4 in \cite{C-W}, there exists a unique, global, strong $H^{2}$-solution $u\in C(\mathbb{R},H^{2}(\mathbb{R}^{d}))$ of the Cauchy problem \eqref{eq1}. Moreover,
\[u\in L^{q}(\mathbb{R},H^{2,r}(\mathbb{R}^{d})) \,\,\,\,\,\, and \,\,\,\,\,\, u_{t}\in L^{q}(\mathbb{R},L^{r}(\mathbb{R}^{d}))\]
for every admissible pair $(q,r)$. If we assume further $8\leq d<12$, then the solution $u$ depends continuously on the initial value $\phi$, that is, if $\phi_{n}\rightarrow \phi$ in $H^{2}(\mathbb{R}^{d})$ and if $u_{n}$ denotes the solution of \eqref{eq1} with the initial value $\phi_{n}$, then $u_{n}$ is global for $n$ large enough, furthermore, $u_{n}\rightarrow u$ in $L^{q}(\mathbb{R},H^{2,r}(\mathbb{R}^{d}))$ and $(u_{n})_{t}\rightarrow u_{t}$ in $L^{q}(\mathbb{R},L^{r}(\mathbb{R}^{d}))$ as $n\rightarrow\infty$ for every admissible pair $(q,r)$. In particular, $u_{n}\rightarrow u$ in $C(\mathbb{R},H^{2}(\mathbb{R}^{d}))$.
\end{thm}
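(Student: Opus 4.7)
The plan is to begin by invoking Theorem 1.4 of Cazenave--Weissler \cite{C-W} to obtain the global existence and Strichartz bounds for $u$ and, for $n$ large enough, for $u_n$: since $\phi_n\to\phi$ in $H^{2}$, the smallness of $\|\Delta\phi\|_{L^{2}}$ transfers to $\|\Delta\phi_n\|_{L^{2}}$, so each $u_n$ is a global strong $H^{2}$ solution with uniform-in-$n$ Strichartz bounds. To obtain the second-order regularity $u,u_n\in L^{q}(\mathbb{R},H^{2,r})$ together with $u_t,(u_n)_t\in L^{q}(\mathbb{R},L^{r})$, I would differentiate \eqref{eq1} in time and work with the linear equation
\begin{equation*}
i\partial_t v+\Delta v=g_z(u)v+g_{\bar z}(u)\bar v,\qquad v(0)=i\bigl(\Delta\phi-g(\phi)\bigr),
\end{equation*}
satisfied by $v=u_t$, closing it by a Strichartz fixed-point in the spirit of Kato \cite{K1}, and then recovering $\Delta u=iu_t-g(u)\in L^{q}(\mathbb{R},L^{r})$ directly from the equation.

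For the continuous dependence, Duhamel together with the pointwise Lipschitz-type bound $|g(u_n)-g(u)|\lesssim(|u_n|+|u|)^{4/(d-4)}|u_n-u|$ yields a contraction in a lower-regularity Strichartz space, so that $u_n\to u$ in every admissible $L^{q}(\mathbb{R},L^{r})$ (this is essentially the $H^{s}\to H^{s}$ continuity for $s<2$ already recorded in the introduction). The heart of the matter is to upgrade this convergence to $H^{2,r}$ without a Lipschitz hypothesis on $g_z,g_{\bar z}$. For $d\geq 8$ one only has the H\"older estimate
\begin{equation*}
|g_z(u_n)-g_z(u)|\lesssim|u_n-u|^{4/(d-4)},\qquad \tfrac{4}{d-4}\leq 1,
\end{equation*}
so a direct attempt to control $\nabla^{2}(g(u_n)-g(u))$ produces a term of the form $(g_z(u_n)-g_z(u))\nabla^{2}u$ which cannot be estimated in any standard Lebesgue-scale Strichartz space.

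My device, following the strategy advertised in the abstract, is to work with the difference $v_n:=(u_n)_t-u_t$, which satisfies
\begin{equation*}
i\partial_t v_n+\Delta v_n=g_z(u_n)v_n+g_{\bar z}(u_n)\bar v_n+\bigl[g_z(u_n)-g_z(u)\bigr]u_t+\bigl[g_{\bar z}(u_n)-g_{\bar z}(u)\bigr]\overline{u_t},
\end{equation*}
and to measure it in an \emph{exotic} Strichartz space obtained by inserting a fractional time-derivative of small order $\sigma>0$ on a non-standard pair $(q,r)$ that is still controlled by the usual Strichartz machinery. The role of $\sigma$ is to furnish a Sobolev cushion which just absorbs the H\"older exponent $\frac{4}{d-4}$: the source terms $(g_z(u_n)-g_z(u))u_t$ and its conjugate are then estimated, in the dual exotic norm, by H\"older in space and time, combining the already-established $L^{q}(\mathbb{R},L^{r})$ convergence of $u_n\to u$ with the uniform Strichartz bounds on $u_t$. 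The numerology of this single H\"older--Sobolev closure is exactly what forces the restriction $d<12$, beyond which the required exponents leave the admissible range. A perturbative fixed-point then gives $v_n\to 0$ in the exotic norm; a bootstrap through the standard Strichartz estimates upgrades this to $(u_n)_t\to u_t$ in every admissible $L^{q}(\mathbb{R},L^{r})$; the identity $\Delta(u_n-u)=iv_n-(g(u_n)-g(u))$ promotes it to $u_n\to u$ in $L^{q}(\mathbb{R},H^{2,r})$; and the admissible choice $(q,r)=(\infty,2)$ delivers the $C(\mathbb{R},H^{2})$ conclusion. The principal obstacle I expect is the calibration of the exotic exponents and $\sigma$ so that the H\"older--Sobolev budget closes simultaneously for both coefficient factors and for the inhomogeneous source with admissible indices; this calibration is precisely the quantitative content of the ceiling $d<12$.
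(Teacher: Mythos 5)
Your plan has the right ingredients---exotic Strichartz spaces built from fractional time derivatives, plus a final bootstrap through standard Strichartz to reach all admissible pairs---but you apply the exotic norm to the wrong quantity and in the wrong direction, and as a result the key estimate cannot close.

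The concrete gap is in how you propose to handle the source term $\bigl[g_z(u_n)-g_z(u)\bigr]u_t$ in the equation for $v_n=(u_n)_t-u_t$. Using the H\"older bound \eqref{nonlinearity2} and pairing exponents correctly, one finds (this is \eqref{eq15} in the paper)
\begin{equation*}
\bigl\|\bigl[g_z(u_n)-g_z(u)\bigr]u_t\bigr\|_{L^{\gamma'}_tL^{\rho'}_x}
\lesssim \|u_n-u\|^{\frac{4}{d-4}}_{L^{\gamma}_tL^{\rho^{\ast}}_x}\,\|u_t\|_{L^{\gamma}_tL^{\rho}_x},
\end{equation*}
and the needed control on the first factor lives in $L^{\gamma}_tL^{\rho^{\ast}}_x$ with $\rho^{\ast}>\frac{2d}{d-2}$ --- a non-admissible pair that carries two more spatial derivatives worth of integrability than any $L^{q}L^{r}$ with $(q,r)$ admissible. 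Your step (1), the contraction in the standard lower-regularity Strichartz scale, gives $u_n\to u$ only in admissible $L^{q}L^{r}$; it does \emph{not} give $L^{\gamma}L^{\rho^{\ast}}$. Hence the ``already-established $L^{q}L^{r}$ convergence'' is not enough for the H\"older estimate you rely on, and there is no way to conjure fractional time derivatives of the product $\bigl[g_z(u_n)-g_z(u)\bigr]u_t$ out of thin air, since neither factor has any smoothness beyond $L^p$ bounds. Your choice of applying the exotic norm to $v_n$ with an \emph{additional} positive order $\sigma>0$ goes the wrong way: it corresponds to $1+\sigma$ time derivatives of $u_n-u$, more than two full spatial derivatives, which the sub-Lipschitz nonlinearity cannot support.

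The paper's mechanism is different in a way that matters. The exotic norm $U$ is applied directly to $u_n-u$ (not to $v_n$), with fractional order $s=\frac{20}{3(d+1)}$ chosen so that $\frac12<s<\frac{4}{d-4}$; the constraint $s<\frac{4}{d-4}$ is exactly the origin of $d<12$. The crucial nonlinear estimate \eqref{nonlinear estimate}, namely $\|g_z(v)u\|_V\lesssim\|v\|^{4/(d-4)}_{\dot W}\|u\|_U$, is \emph{Lipschitz} in $u$ and treats $g_z(v)$ as a mere bounded coefficient (via the Littlewood--Paley splitting in time in \eqref{f-l nonlinear}), so it never requires differentiating $g_z$. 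Writing $g(u_n)-g(u)$ as an integral of $g_z(\cdot)(u_n-u)$ terms, the paper runs a perturbative fixed point in the $U$-norm to obtain $\|u_n-u\|_U\to0$, then interpolates with \eqref{eq14} to get precisely $u_n\to u$ in $L^{\gamma}L^{\rho^{\ast}}$ (Claim \ref{claim}), and only \emph{afterwards} uses the H\"older bound on $g_z$ to control $\partial_t\bigl(g(u_n)-g(u)\bigr)$ and close the $\dot X_{q,r}$ estimates. In short, the exotic spaces supply the missing $L^{\gamma}L^{\rho^{\ast}}$ convergence; you cannot skip that step and go directly to a fixed-point for $v_n$ using only admissible-pair convergence.
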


The rest of this paper is organized as follows. In Section 2 we give some useful notation and preliminary knowledge. Section 3 is devoted to the proof of Theorem \ref{th1}.

\section{Notation and preliminary knowledge}
\subsection{Some notation}
Throughout this paper, we use the following notation. $\bar{z}$ is the conjugate of the complex number $z$, $\Re z$ and $\Im z$ are respectively the real and the imaginary part of the complex number $z$. We denote by $p'$ the conjugate of the exponent $p\in[1,\infty]$ defined by $\frac{1}{p}+\frac{1}{p'}=1$. All function spaces involved are spaces of complex valued functions. We will use the usual notation for various complex-valued function spaces: Lebesgue space $L^{r}=L^{r}(\mathbb{R}^{d})$, Sobolev spaces $H^{s,r}=H^{s,r}(\mathbb{R}^{d}):=(I-\Delta)^{-s/2}L^{r}$, homogeneous Sobolev spaces $\dot{H}^{s,r}=\dot{H}^{s,r}(\mathbb{R}^{d}):=(-\Delta)^{-s/2}L^{r}$, Besov spaces $B^{s}_{r,b}=B^{s}_{r,b}(\mathbb{R}^{d})$ and homogeneous Besov spaces $\dot{B}^{s}_{r,b}=\dot{B}^{s}_{r,b}(\mathbb{R}^{d})$. For the corresponding interpolation and embedding properties of these spaces, refer to \cite{B-L,T1}. For any interval $I\subset\mathbb{R}$ and any Banach space $X$ just mentioned, we denote by $C(I,X)$ the space of strongly continuous functions from $I$ to $X$ and by $L^{q}(I,X)$ the space of measurable functions $u$ from $I$ to $X$ such that $\|u(\cdot)\|_{X}\in L^{q}(I)$. Let $L^{q}_{t}(\mathbb{R},L^{r}_{x}(\mathbb{R}^{d}))$ denote the Banach space equipped with norm
\[\|u\|_{L^{q}(\mathbb{R},L^{r}(\mathbb{R}^{d}))}=(\int_{\mathbb{R}}(\int_{\mathbb{R}^{d}}|u(t,x)|^{r}dx)^{q/r}dt)^{1/q},\]
with the usual modifications when $q$ or $r$ is infinity, or when the domain $\mathbb{R}\times\mathbb{R}^{d}$ is replaced by a smaller region of space-time such as $I\times\mathbb{R}^{d}$.

As usual, we define the ``admissible pair" as below, which plays an important role in our space-time estimates.
\begin{defn}
We say that a pair $(q,r)$ is admissible if
\begin{equation}\label{admissible pair}
    \frac{2}{q}=\delta(r)=d(\frac{1}{2}-\frac{1}{r})
\end{equation}
and $2\leq r\leq\frac{2d}{d-2}$($2\leq r\leq\infty$ if $d=1$, $2\leq r<\infty$ if $d=2$).
\end{defn}

Throughout this paper, $(\gamma,\rho)$ denotes a particular choice of admissible pair defined by
\begin{equation}\label{choice}
    \gamma=\frac{2(d-2)}{d-4}, \,\,\,\,\, \rho=\frac{2d(d-2)}{d(d-4)+8}.
\end{equation}

The following definition concerns the parameters that occur in our paper.
\begin{defn}\label{parameter}
For any $2\leq r<d/2$, we define the corresponding Sobolev exponent $r^{\ast}\in(r,+\infty)$ by
\begin{equation}\label{Sobolev exponent}
    \frac{1}{r^{\ast}}=\frac{1}{r}-\frac{2}{d},
\end{equation}
so that $\dot{H}^{2,r}(\mathbb{R}^{d})\hookrightarrow L^{r^{\ast}}(\mathbb{R}^{d})$ by Sobolev embedding.
\end{defn}

In what follows positive constants will be denoted by $C$ and will change from line to line. If necessary, by $C_{\star,\cdots,\star}$ we denote positive constants depending only on the quantities appearing in subscript continuously.

\subsection{Littlewood-Paley theory on time variable.}
We define the Fourier transform with respect to time variable $t$ to be
\[\mathcal{F}f(\tau)=\widehat{f}(\tau):=\int_{\mathbb{R}_{t}}e^{-2\pi it\tau}f(t)dt.\]
Let $\wp$ be an annulus in momentum space $\mathbb{R}_{\tau}$ given by
\[\wp=\{\tau\in\mathbb{R}_{\tau}\,:\, \frac{3}{4}\leq |\tau|\leq\frac{8}{3}\},\]
and let $\chi(\tau)$ be a radial bump function supported in the ball $\{\tau\in\mathbb{R}_{\tau}: |\tau|\leq 4/3\}$ and equal to 1 on the ball $\{\tau\in\mathbb{R}_{\tau}: |\tau|\leq 3/4\}$, $\widehat{\varphi}(\tau)=\chi(\tau/2)-\chi(\tau)$ be a radial bump function supported in the annulus $\wp$ respectively, such that
\[0\leq\chi(\tau),\,\,\widehat{\varphi}(\tau)\leq 1.\]
So we can define decomposition of the whole momentum space $\mathbb{R}_{\tau}$,
\[\left\{
  \begin{array}{ll}
    \chi(\tau)+\sum_{j\geq 0}\widehat{\varphi}(2^{-j}\tau)=1, & \forall \tau \in \mathbb{R}_\tau  \,\,\,\,\, (inhomogeneous),\\
    \,&\\
   \sum_{j\in \mathbb{Z}}\widehat{\varphi}(2^{-j}\tau)=1, & \forall \tau \in \mathbb{R}_\tau\setminus \{0\} \,\,\, (homogeneous),
  \end{array}
\right.\]
with the following properties: \\
(i) $supp\,\widehat{\varphi}(2^{-j}\cdot)\bigcap supp\,\widehat{\varphi}(2^{-j'}\cdot)=\emptyset$,
for $\forall j,j'\in\mathbb{Z}$ such that $|j-j'|\geq 2$; \\
(ii) $supp\,\chi(\cdot)\bigcap supp\,\widehat{\varphi}(2^{-j}\cdot)=\emptyset$, for $\forall j\geq 1$.

Now setting $h(t)=\mathcal{F}^{-1}(\chi(\tau))$, $\varphi(t)=\mathcal{F}^{-1}(\widehat{\varphi}(\tau))$, then we can define the Fourier multipliers as following\\
$\left\{
\begin{array}{ll}
\Delta_{j}u=\mathcal{F}^{-1}(\widehat{\varphi}(2^{-j}\tau)\widehat{u}(\tau))
=2^{j}\int_{\mathbb{R}}\varphi(2^js)u(t-s)ds, &j\in\mathbb{Z},\\
\,&\\
S_{<j}u=\mathcal{F}^{-1}(\chi(2^{-j}\tau)\widehat{u}(\tau))=2^{j} \int_{\mathbb{R}}h(2^js)u(t-s)ds, &j\in\mathbb{Z},\\
\,&\\
S_{\geq j}u=u-S_{<j}u=\mathcal{F}^{-1}((1-\chi(2^{-j}\tau))\widehat{u}(\tau)), &j\in\mathbb{Z}.
\end{array}
\right.$ \\

Similarly, we can define $S_{\leq j}$, $S_{>j}$ and $S_{i\leq \cdot<j}:=S_{<j}-S_{<i}$. Due to identity $\widehat{\varphi}(\tau/2^j)=\chi(\tau/2^{j+1})-\chi(\tau/2^j)$, we have $\Delta_j u=(S_{<j+1}-S_{<j})u$ for any $j\in\mathbb{Z}$. So for any $u\in \mathcal{S}'(\mathbb{R}_{t})$, we have inhomogeneous Littlewood-Paley dyadic decomposition $u=S_{<0}u+\sum_{j\geq0}\Delta_{j}u$ and homogeneous decomposition $u=\sum_{j\in\mathbb{Z}}\Delta_{j}u$ in $\mathcal{S}'(\mathbb{R}_{t})$, respectively.

As with all Fourier multipliers, the Littlewood-Paley operators on time variable commute with differential operators such as $i\partial_{t}+\Delta$ and complex conjugation. They are self-adjoint and bounded on every $L_{t}^{p}$ and $\dot{H}_{t}^{s}$ space for $1\leq p\leq\infty$ and $s\geq0$. We will use basic properties of these operators frequently, for instance, in the following lemma(see \cite{T-V,T2}).
\begin{lem}\label{bernstein}(Bernstein estimates). For $1\leq p\leq q\leq \infty$, $s\geq0$, we have
\[\|\partial_{t}^{\pm s}\Delta_{j}f\|_{L^{p}_{t}(\mathbb{R})}\sim 2^{\pm js}\|\Delta_{j}f\|_{L^{p}_{t}(\mathbb{R})},\]
\[\|\partial_{t}^{s}S_{\leq j}f\|_{L^{p}_{t}(\mathbb{R})}\leq C2^{js}\|S_{\leq j}f\|_{L^{p}_{t}(\mathbb{R})},\]
\[\|\Delta_{j}f\|_{L^{q}_{t}(\mathbb{R})}\leq C2^{j(\frac{1}{p}-\frac{1}{q})}\|\Delta_{j}f\|_{L^{p}_{t}(\mathbb{R})},\]
\[\|S_{\leq j}f\|_{L^{q}_{t}(\mathbb{R})}\leq C2^{j(\frac{1}{p}-\frac{1}{q})}\|S_{\leq j}f\|_{L^{p}_{t}(\mathbb{R})},\]
\[\|S_{\geq j}f\|_{L^{p}_{t}(\mathbb{R})}\leq C2^{-js}\|\partial_{t}^{s}S_{\geq j}f\|_{L^{p}_{t}(\mathbb{R})}.\]
\end{lem}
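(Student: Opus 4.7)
The plan is to recognize each of the five inequalities as a statement about convolution against a scaled kernel on $\mathbb{R}_t$ and to extract the correct $2^{j\cdot}$ factor by scaling, with boundedness supplied by Young's convolution inequality. The unifying template: if $m\in\mathcal{S}(\mathbb{R}_\tau)$ and $m_j(\tau):=m(2^{-j}\tau)$, then $\mathcal{F}^{-1}m_j(t)=2^j(\mathcal{F}^{-1}m)(2^jt)$, and hence $\|\mathcal{F}^{-1}m_j\|_{L^r(\mathbb{R})}=2^{j(1-1/r)}\|\mathcal{F}^{-1}m\|_{L^r(\mathbb{R})}$.

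For the first inequality, I pick a bump $\widetilde\varphi\in C_c^\infty(\mathbb{R})$ supported in a slightly fattened annulus with $\widetilde\varphi\equiv 1$ on $\mathrm{supp}\,\widehat\varphi$, and set $m_{\pm s}(\tau):=|\tau|^{\pm s}\widetilde\varphi(\tau)$, which is smooth and compactly supported because $\mathrm{supp}\,\widetilde\varphi$ avoids the origin. Then $\partial_t^{\pm s}\Delta_j f=c\,2^{\pm js}\,\mathcal{F}^{-1}(m_{\pm s}(2^{-j}\tau))*\Delta_j f$, so Young yields the upper bound $\lesssim 2^{\pm js}\|\Delta_j f\|_{L^p}$; the matching lower bound (i.e.\ the $\sim$) comes from re-running the same argument with the symbol $|\tau|^{\mp s}\widetilde\varphi$ to recover $\Delta_j f$ as a $2^{\mp js}$-scaled convolution of $\partial_t^{\pm s}\Delta_j f$ against a Schwartz kernel. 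For the $L^p\to L^q$ Bernstein estimates on lines three and four, I fix $\widetilde\varphi,\widetilde\chi\in C_c^\infty$ with $\widetilde\varphi\equiv 1$ on $\mathrm{supp}\,\widehat\varphi$ and $\widetilde\chi\equiv 1$ on $\mathrm{supp}\,\chi$; the reproducing formulas $\Delta_j f=\mathcal{F}^{-1}(\widetilde\varphi(2^{-j}\tau))*\Delta_j f$ and $S_{\leq j}f=\mathcal{F}^{-1}(\widetilde\chi(2^{-j}\tau))*S_{\leq j}f$, combined with Young's inequality at exponent $1/r=1+1/q-1/p\in[0,1]$, deliver the factor $2^{j(1-1/r)}=2^{j(1/p-1/q)}$.

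The second and fifth inequalities are the subtlest step because the naive multipliers $|\tau|^s\widetilde\chi(\tau)$ and $(1-\chi(\tau))|\tau|^{-s}$ fail to be smooth on all of $\mathbb{R}_\tau$ for non-integer $s$, and (in the second case) are not compactly supported, so it is not immediate that their inverse Fourier transforms lie in $L^1$; this is where I expect the main technical obstacle. I would bypass it by reducing to the already-proved first inequality through the Littlewood--Paley decomposition. For the second estimate, the telescoping identity $\chi(2^{-j}\tau)-\chi(\tau)=\sum_{0\le k<j}\widehat\varphi(2^{-k}\tau)$ gives $S_{\leq j}f=S_{<0}f+\sum_{0\le k<j}\Delta_k f$; I then apply the first inequality termwise, invoke Fourier support considerations (plus a few boundary terms) to absorb each $\|\Delta_k f\|_{L^p}$ into $\|S_{\leq j}f\|_{L^p}$, and sum the geometric series $\sum_{k\le j}2^{ks}\sim 2^{js}$, dominated by its top term. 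Analogously, for the last inequality I write $S_{\geq j}f=\sum_{k\ge j}\Delta_k f$, invert the first inequality as $\|\Delta_k f\|_{L^p}\lesssim 2^{-ks}\|\partial_t^s \Delta_k S_{\geq j}f\|_{L^p}\lesssim 2^{-ks}\|\partial_t^s S_{\geq j}f\|_{L^p}$ (using $\Delta_k=\Delta_k S_{\geq j}$ for $k\ge j$), and sum the now-convergent (for $s>0$) series $\sum_{k\ge j}2^{-ks}\sim 2^{-js}$; the $s=0$ case of both estimates is trivial.
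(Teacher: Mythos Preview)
The paper does not prove this lemma; it simply states the Bernstein estimates and refers to \cite{T-V,T2}. So there is no in-paper argument to compare against, and your proposal is being judged on its own merits.

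Your approach is the standard one and is correct in outline. One remark: your concern about the second and fifth inequalities is somewhat overstated. For the second, the multiplier $m(\tau)=|\tau|^{s}\widetilde\chi(\tau)$ actually does satisfy $\mathcal{F}^{-1}m\in L^{1}(\mathbb{R})$: the smooth compactly supported part away from $\tau=0$ is harmless, while the piece near the origin has Fourier transform decaying like $|t|^{-1-s}$ at infinity (inheriting the homogeneity of $|\tau|^{s}$), which is integrable for $s>0$. A similar direct argument handles the fifth. That said, your dyadic-summation workaround is perfectly valid and arguably cleaner, since it reduces everything to the annular case where the symbols are manifestly in $C_{c}^{\infty}$.

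One small point to tighten in your argument for the fifth inequality: the triangle inequality step $\|S_{\geq j}f\|_{L^{p}}\leq\sum_{k\geq j}\|\Delta_{k}f\|_{L^{p}}$ presupposes that the dyadic sum converges in $L^{p}$, which is not automatic for arbitrary $f$. Either restrict first to Schwartz $f$ and extend by density, or note that once the right-hand side $\|\partial_{t}^{s}S_{\geq j}f\|_{L^{p}}$ is assumed finite each term $\|\Delta_{k}f\|_{L^{p}}$ is controlled and the geometric series is summable, so the partial sums form a Cauchy sequence in $L^{p}$ whose limit must be $S_{\geq j}f$.
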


\subsection{Properties of the Schr\"{o}dinger group $(e^{it\Delta})_{t\in\mathbb{R}}$}
We denote by $(e^{it\Delta})_{t\in\mathbb{R}}$ the Schr\"{o}dinger group, which is isometric on $H^{s}$ and $\dot{H}^{s}$ for every $s\geq0$, and satisfies the Dispersive estimates and Strichartz's estimates(for more details, see Keel and Tao \cite{K-T}). We will use freely the well-known properties of of the Schr\"{o}dinger group $(e^{it\Delta})_{t\in\mathbb{R}}$(see \cite{C} for an account of these properties). Here we only mention specially the Strichartz's estimates for non-admissible pair as below, which can be found in \cite{C,T-V}.
\begin {lem}\label{lem9}
(Strichartz's estimates for non-admissible pairs). Let $I$ be an interval of $\mathbb{R}$ (bounded or not), set $J=\bar{I}$, let $t_{0}\in J$, and consider $\Phi$ defined by: $t\mapsto \Phi_{f}(t)=\int^{t}_{t_{0}}e^{i(t-s)\Delta}f(s)ds$ for $t\in I$. Assume $2<r<2d/(d-2)$($2<r\leq\infty$ if $d=1$) and let $1<a,\tilde{a}<\infty$ satisfy \[\frac{1}{\tilde{a}}+\frac{1}{a}=\delta(r)=N(\frac{1}{2}-\frac{1}{r}).\]
It follows that $\Phi_{f}\in L^{a}(I,L^{r}(\mathbb{R}^{d}))$ for every $f\in L^{\tilde{a}'}(I,L^{r'}(\mathbb{R}^{d}))$. Moreover, there exists a constant C independent of $I$ such that
\[\|\Phi_{f}\|_{L^{a}(I,L^{r})}\leq C\|f\|_{L^{\tilde{a}'}(I,L^{r'})}\]
for every $f\in L^{\tilde{a}'}(I,L^{r'}(\mathbb{R}^{d}))$.
\end{lem}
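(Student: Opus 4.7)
The plan is to reduce the estimate to a one-dimensional Hardy--Littlewood--Sobolev (HLS) inequality on the time variable, using the pointwise dispersive estimate for the free Schr\"odinger group to eliminate the spatial variables. The strict conditions $2<r<2d/(d-2)$ and $1<a,\tilde{a}<\infty$ in the hypothesis are exactly the conditions that make HLS applicable, so no Christ--Kiselev truncation or endpoint interpolation will be required.

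First I would apply Minkowski's inequality in $L^{r}_{x}$ to move the spatial norm under the time integral:
\[
\|\Phi_{f}(t)\|_{L^{r}_{x}}\le \int_{J}\|e^{i(t-s)\Delta}f(s)\|_{L^{r}_{x}}\,ds.
\]
Since $2<r<2d/(d-2)$, the classical dispersive estimate (obtained by interpolating the $L^{\infty}_{x}$ dispersive bound with $L^{2}_{x}$ conservation) yields $\|e^{i\tau\Delta}h\|_{L^{r}_{x}}\le C|\tau|^{-\delta(r)}\|h\|_{L^{r'}_{x}}$, and substituting this in gives
\[
\|\Phi_{f}(t)\|_{L^{r}_{x}}\le C\int_{\mathbb{R}}|t-s|^{-\delta(r)}\chi_{J}(s)\|f(s)\|_{L^{r'}_{x}}\,ds = C\bigl(|\cdot|^{-\delta(r)}\ast G\bigr)(t),
\]
where $G(s):=\chi_{J}(s)\|f(s)\|_{L^{r'}_{x}}$. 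Note that only the absolute value $|t-s|^{-\delta(r)}$ appears, so the truncation of the integral to $s<t$ plays no role at this stage; the retarded and full integrals obey the same pointwise-in-$t$ bound.

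Next I would invoke the one-dimensional HLS inequality: for $0<\alpha<1$ and $1<p<q<\infty$ with $\frac{1}{p}-\frac{1}{q}=1-\alpha$, the convolution operator $G\mapsto|t|^{-\alpha}\ast G$ maps $L^{p}(\mathbb{R})$ boundedly into $L^{q}(\mathbb{R})$. Taking $\alpha=\delta(r)$, $p=\tilde{a}'$, and $q=a$, the scaling condition becomes $\frac{1}{\tilde{a}'}-\frac{1}{a}=1-\delta(r)$, which is equivalent to the hypothesis $\frac{1}{\tilde{a}}+\frac{1}{a}=\delta(r)$. All HLS hypotheses are met: $0<\delta(r)<1$ since $2<r<2d/(d-2)$; $1<\tilde{a}'<\infty$ since $1<\tilde{a}<\infty$; and $\tilde{a}'<a$ because $\frac{1}{\tilde{a}}+\frac{1}{a}=\delta(r)<1$ forces $\frac{1}{a}<1-\frac{1}{\tilde{a}}=\frac{1}{\tilde{a}'}$. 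Taking the $L^{a}_{t}$ norm of the pointwise bound and applying HLS gives
\[
\|\Phi_{f}\|_{L^{a}(I,L^{r}_{x})}\le C\bigl\||\cdot|^{-\delta(r)}\ast G\bigr\|_{L^{a}(\mathbb{R})}\le C\|G\|_{L^{\tilde{a}'}(\mathbb{R})}=C\|f\|_{L^{\tilde{a}'}(I,L^{r'}_{x})}.
\]
The HLS constant depends only on $d,r,a,\tilde{a}$, so the constant is independent of $I$; extending $f$ by zero outside $I$ justifies passing from $J$ to $\mathbb{R}$ in the time integration. The only subtle point is bookkeeping the three strict conditions $0<\delta(r)<1$, $1<\tilde{a}'<\infty$, and $\frac{1}{\tilde{a}}+\frac{1}{a}<1$ (forcing $\tilde{a}'<a$) simultaneously; once these are verified from the hypotheses, HLS closes the estimate.
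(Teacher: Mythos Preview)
Your proof is correct and is the standard argument; the paper itself does not prove this lemma but simply cites it from \cite{C,T-V}. In fact, the paper later alludes to exactly your method---``dispersive estimates and Riesz potential inequalities''---when deriving the analogous exotic Strichartz estimate \eqref{exotic Strichartz estimate}, so your approach is fully in line with what the authors have in mind.
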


\section{PROOF OF THEOREM \ref{th1}}

Now we are ready for proving Theorem \ref{th1}. First, let us recall some facts concerning the global existence, regularity and uniqueness of the solution to Cauchy problem \eqref{eq1}(see \cite{C-W}). To this end, we define
\begin{equation}\label{nonlinear interaction}
    \mathcal{G}(u)(t)=-i\int_{0}^{t}e^{i(t-s)\Delta}g(u(s))ds.
\end{equation}
From \eqref{nonlinearity0}, \eqref{nonlinearity1}, Sobolev embedding $\dot{H}^{2,r}(\mathbb{R}^{d})\hookrightarrow L^{r^{\ast}}(\mathbb{R}^{d})$, the equations $i\partial_{t}\mathcal{G}(u)+\Delta\mathcal{G}(u)=g(u)$, $\partial_{t}\mathcal{G}(u)(t)=-ie^{it\Delta}g(\phi)-i\int_{0}^{t}e^{i(t-s)\Delta}\partial_{s}g(u(s))ds$, Strichartz's and H\"{o}lder's estimates, it follows that
\begin{equation}\label{eq2}
    \|\partial_{t}\mathcal{G}(u)\|_{L^{q}(\mathbb{R},L^{r}(\mathbb{R}^{d}))}\leq C\|\Delta\phi\|_{L^{2}(\mathbb{R}^{d})}+C\|\partial_{t}g(u)\|_{L^{\gamma'}(\mathbb{R},L^{\rho'}(\mathbb{R}^{d}))},
\end{equation}
\begin{equation}\label{eq3}
    \|\Delta\mathcal{G}(u)\|_{L^{q}(\mathbb{R},L^{r})}\leq\|g(u)\|_{L^{q}(\mathbb{R},L^{r})}
    +C\|\Delta\phi\|_{L^{2}}+C\|\partial_{t}g(u)\|_{L^{\gamma'}(\mathbb{R},L^{\rho'})},
\end{equation}
\begin{equation}\label{eq4}
    \|g(u)\|_{L^{q}(\mathbb{R},L^{r}(\mathbb{R}^{d}))}\leq C\|u\|^{\frac{4}{d-4}}_{L^{\infty}(\mathbb{R},L^{\frac{2d}{d-4}}(\mathbb{R}^{d}))}\|\Delta u\|_{L^{q}(\mathbb{R},L^{r}(\mathbb{R}^{d}))},
\end{equation}
\begin{equation}\label{eq5}
    \|\partial_{t}g(u)\|_{L^{\gamma'}(\mathbb{R},L^{\rho'}(\mathbb{R}^{d}))}\leq C\|\Delta u\|^{\frac{4}{d-4}}_{L^{\gamma}(\mathbb{R},L^{\rho}(\mathbb{R}^{d}))}
    \|u_{t}\|_{L^{\gamma}(\mathbb{R},L^{\rho}(\mathbb{R}^{d}))},
\end{equation}
\begin{eqnarray}\label{eq6}
&&\|\mathcal{G}(u)-\mathcal{G}(v)\|_{L^{q}(\mathbb{R},L^{r}(\mathbb{R}^{d}))}\\
\nonumber &\leq& C(\|\Delta u\|^{\frac{4}{d-4}}_{L^{\gamma}(\mathbb{R},L^{\rho}(\mathbb{R}^{d}))}+\|\Delta v\|^{\frac{4}{d-4}}_{L^{\gamma}(\mathbb{R},L^{\rho}(\mathbb{R}^{d}))})
\|u-v\|_{L^{\gamma}(\mathbb{R},L^{\rho}(\mathbb{R}^{d}))},
\end{eqnarray}
where the constant $C$ is independent of $u$, $v$, $\phi$ and $g$, $(\gamma,\rho)$ is the admissible pair defined by \eqref{choice} and $(q,r)$ is an arbitrary admissible pair.

Therefore, if $\phi\in H^{2}$ with $\|\Delta\phi\|_{L^{2}}$ sufficiently small, by Strichartz's estimates, it follows that the problem \eqref{eq1} can be solved by using fixed point theorem to the equivalent integral equation(Duhamel's formula)
\begin{equation}\label{Duhamel}
    u(t)=\mathcal{H}(u)(t):=e^{it\Delta}\phi+\mathcal{G}(u)(t)
\end{equation}
in the set
\[E=\{u\in L^{\gamma}(\mathbb{R},H^{2,\rho})\cap H^{1,\gamma}(\mathbb{R},L^{\rho}); \|\Delta u\|_{L^{\gamma}(\mathbb{R},L^{\rho})}+\|u_{t}\|_{L^{\gamma}(\mathbb{R},L^{\rho})}\leq2\eta\}\]
equipped with the distance $d(u,v)=\|u-v\|_{L^{\gamma}(\mathbb{R},L^{\rho})}$ for $\eta>0$ sufficiently small. $(E,d)$ is a complete metric space(Indeed, the closed ball of radius $\eta$ in $L^{\gamma}(\mathbb{R},L^{\rho}(\mathbb{R}^{d}))$ is weakly compact). By Lemma 5.6 in \cite{C-W}, we have $g(u)\in C(\mathbb{R},L^{2})$ for any $u\in E$, moreover, there is a uniform estimate
\begin{eqnarray}\label{eq7}
&&\|g(u)\|_{L^{\infty}(\mathbb{R},L^{2})}\leq C\|u\|^{\frac{d}{d-4}}_{L^{\infty}(\mathbb{R},L^{\frac{2d}{d-4}})}\\
\nonumber &\leq& C[\|\Delta u\|^{\frac{d}{d-4}}_{L^{\gamma}(\mathbb{R},L^{\rho})}\|u_{t}\|_{L^{\gamma}(\mathbb{R},L^{\rho})}]^{\frac{d}{2(d-2)}}\leq C(2\eta)^{\frac{d}{d-4}}.
\end{eqnarray}

To see the details in the construction of global solutions to IVP \eqref{eq1}, let us define the norm $\dot{X}_{q,r}(\mathbb{R}_{t}\times\mathbb{R}_{x}^{d})$ for every admissible pair $(q,r)$ by
\begin{equation}\label{eq8}
    \|u\|_{\dot{X}_{q,r}}:=\|\Delta u\|_{L^{q}(\mathbb{R},L^{r})}+\|u_{t}\|_{L^{q}(\mathbb{R},L^{r})}.
\end{equation}
By applying Strichartz's estimates, we can deduce from \eqref{eq2}-\eqref{eq7} that
\begin{equation}\label{eq9}
    \|\mathcal{H}(u)\|_{L^{q}(\mathbb{R},L^{r}(\mathbb{R}^{d}))}\leq C\|\phi\|_{L^{2}(\mathbb{R}^{d})}+C(2\eta)^{\frac{4}{d-4}}\|u\|_{L^{\gamma}(\mathbb{R},L^{\rho}(\mathbb{R}^{d}))},
\end{equation}
\begin{equation}\label{eq10}
    \|\mathcal{H}(u)\|_{\dot{X}_{q,r}}\leq C\|\Delta \phi\|_{L^{2}}+C(2\eta)^{\frac{4}{d-4}}\|u\|_{\dot{X}_{q,r}}+C\|u\|^{\frac{d}{d-4}}_{\dot{X}_{\gamma,\rho}},
\end{equation}
\begin{equation}\label{eq11}
    \|\mathcal{G}(u)-\mathcal{G}(v)\|_{L^{q}(\mathbb{R},L^{r}(\mathbb{R}^{d}))}\leq 2C(2\eta)^{\frac{4}{d-4}}\|u-v\|_{L^{\gamma}(\mathbb{R},L^{\rho}(\mathbb{R}^{d}))}
\end{equation}
for arbitrary $u,v\in E$ and admissible pair $(q,r)$, where the constant $C$ is independent of $u$, $v$, $\phi$ and $g$.

Now let constant $K$ be larger than the constant $C$ appearing in the above three estimates \eqref{eq9}-\eqref{eq11} for the particular choice of the admissible pair $(q,r)=(\gamma,\rho)$. Thus if $\phi\in H^{2}(\mathbb{R}^{d})$ satisfies $K\|\Delta\phi\|_{L^{2}}<\eta$ with $\eta>0$ sufficiently small such that
\begin{equation}\label{eq12}
    4K(2\eta)^{\frac{4}{d-4}}\leq 1,
\end{equation}
then the estimates \eqref{eq9}-\eqref{eq11} imply that $\mathcal{H}: E\rightarrow E$ and $\mathcal{H}$ is a strict contraction on $(E,d)$. Therefore, $\mathcal{H}$ has a fixed point $u$, which is the unique global solution of \eqref{eq1} in $E$. Moreover, it follows from \eqref{eq2}, \eqref{eq5}, \eqref{Duhamel} and \eqref{eq9} that $u$, $u_{t}\in L^{q}(\mathbb{R},L^{r})\cap C(\mathbb{R},L^{2})$ for every admissible pair $(q,r)$. Since by Lemma 5.6 in \cite{C-W}, we have $g(u)\in C(\mathbb{R},L^{2})$ and uniform estimate \eqref{eq7}, the equation \eqref{eq1} implies that $\Delta u\in C(\mathbb{R},L^{2})$ and $u\in L^{\infty}(\mathbb{R},H^{2})$, the uniform estimate follows from \eqref{eq7} and \eqref{eq10}. One can also deduce easily from $u\in L^{\infty}(\mathbb{R},H^{2})$ and $u\in L^{\gamma}(\mathbb{R},H^{2,\rho})$ that $u\in L^{q}(\mathbb{R},H^{2,r})$ for every admissible pair $(q,r)$ with $r\leq\rho$. However, in fact, since $d\geq8$, let $\tilde{C}(r,d)$ denote the best constant in the Sobolev inequality
\[\|u\|_{L^{r^{\ast}}(\mathbb{R}^{d})}\leq C(r,d)\|\Delta u\|_{L^{r}(\mathbb{R}^{d})},\]
where $r\in[2,\frac{2d}{d-2}]$, then from $\sup_{r}\tilde{C}(r,d)=C(d)<\infty$, \eqref{eq3}, \eqref{eq4}, \eqref{eq7} and \eqref{eq10}, we can infer that by possibly choosing $\eta$ smaller if necessary, $u\in L^{q}(\mathbb{R},H^{2,r})$ for every admissible pair $(q,r)$. The unconditional uniqueness for solutions to \eqref{eq1} in $C(\mathbb{R},H^{2}(\mathbb{R}^{d}))$ follows from \cite{C,K3}(see e.g. Proposition 4.2.13 in \cite{C}).

Now we consider the continuity of the the solution map $\phi\mapsto u$ in $H^{2}(\mathbb{R}^{d})$ for Cauchy problem \eqref{eq1} under the restriction $8\leq d<12$. Suppose $\phi_{n}\rightarrow\phi$ in $H^{2}(\mathbb{R}^{d})$, then we have $K\|\Delta\phi_{n}\|_{L^{2}}<\eta$ for $n$ sufficiently large, thus the corresponding solutions $u_{n}$ of \eqref{eq1} with initial value $\phi_{n}$ are global for $n$ large enough and $u_{n}\in L^{q}(\mathbb{R},H^{2,r})$, $\partial_{t}u_{n}\in L^{q}(\mathbb{R},L^{r})$ for every admissible pair $(q,r)$. Furthermore, there exists $n_{0}\in\mathbb{N}$ large enough such that
\begin{equation}\label{eq13}
    \max\{\|u\|_{\dot{X}_{\gamma,\rho}},\sup_{n\geq n_{0}}\|u_{n}\|_{\dot{X}_{\gamma,\rho}}\}\leq 2\eta,
\end{equation}
in addition, by possibly choosing $\eta$ smaller if necessary, we can deduce from Strichartz's estimates and \eqref{eq10} that for arbitrary admissible pair $(q,r)$,
\begin{equation}\label{eq14}
    \max\{\|u\|_{\dot{X}_{q,r}},\sup_{n\geq n_{0}}\|u_{n}\|_{\dot{X}_{q,r}}\}\leq C_{q,r}\eta.
\end{equation}

\newtheorem{clm}[thm]{Claim}

\begin{clm}\label{claim}
Assume $8\leq d<12$, we claim that by possibly choosing $\eta$ smaller if necessary, we have as $n\rightarrow\infty$,
\begin{equation}\label{critical-1}
u_{n}\rightarrow u \quad in \quad L^{\gamma}(\mathbb{R},L^{\rho^{\ast}}(\mathbb{R}^{d})).
\end{equation}
\end{clm}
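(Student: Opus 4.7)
The plan is to derive a closed-loop inequality for $Y_n := \|u_n - u\|_{L^{\gamma}(\mathbb{R}, L^{\rho^*}(\mathbb{R}^d))}$ and then drive $Y_n \to 0$ as $\phi_n \to \phi$ in $H^2$. Writing $v_n := u_n - u$ and using the Duhamel formula $v_n = e^{it\Delta}(\phi_n - \phi) + \mathcal{G}(u_n) - \mathcal{G}(u)$, the free evolution is handled by the Sobolev embedding $\dot{H}^{2,\rho} \hookrightarrow L^{\rho^*}$ (valid for $d \geq 8$ since one checks $\rho < d/2$ via $(d-4)^2 > 0$) together with standard Strichartz on the admissible pair $(\gamma, \rho)$:
\[\|e^{it\Delta}(\phi_n - \phi)\|_{L^\gamma L^{\rho^*}} \leq C\|e^{it\Delta}\Delta(\phi_n - \phi)\|_{L^\gamma L^\rho} \leq C\|\Delta(\phi_n - \phi)\|_{L^2} \to 0.\]
For the Duhamel piece, the equation identity $\Delta\mathcal{G}(w) = g(w) - i\partial_t\mathcal{G}(w)$ (from $i\partial_t\mathcal{G}(w)+\Delta\mathcal{G}(w)=g(w)$), combined with Sobolev, reduces the $L^\gamma L^{\rho^*}$-norm to $L^\gamma L^\rho$-norms of $g(u_n) - g(u)$ and $\partial_t(\mathcal{G}(u_n) - \mathcal{G}(u))$.

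The first summand is controlled using the pointwise bound $|g(u_n) - g(u)| \leq C(|u_n|^{4/(d-4)} + |u|^{4/(d-4)})|v_n|$ and a H\"older split in space pairing $L^{d/2}$ with $L^{\rho^*}$ (exploiting $\tfrac{1}{\rho} = \tfrac{2}{d} + \tfrac{1}{\rho^*}$) together with the Sobolev inclusion $H^2 \hookrightarrow L^{2d/(d-4)}$ applied to $u_n$; this yields $\|g(u_n) - g(u)\|_{L^\gamma L^\rho} \leq C\eta^{4/(d-4)} Y_n$. The second summand equals $(v_n)_t - ie^{it\Delta}\Delta(\phi_n - \phi)$, the free-evolution remainder vanishing in $L^\gamma L^\rho$. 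To control $\|(v_n)_t\|_{L^\gamma L^\rho}$, one applies Strichartz to the Schr\"odinger equation satisfied by $(v_n)_t$: the initial data $-i\Delta(\phi_n - \phi) + i(g(\phi_n) - g(\phi))$ vanishes in $L^2$, and the source $\partial_t(g(u_n) - g(u))$ splits pointwise as $|u_n|^{4/(d-4)}|(v_n)_t| + |v_n|^{4/(d-4)}|u_t|$; H\"older pairing $u_n, v_n$ in $L^\gamma L^{\rho^*}$ with $(v_n)_t, u_t$ in $L^\gamma L^\rho$, and absorbing the $\eta^{4/(d-4)}$ self-term, then yields
\[\|(v_n)_t\|_{L^\gamma L^\rho} \leq o_n(1) + C\eta\, Y_n^{4/(d-4)}.\]
Chaining all estimates produces the master inequality $Y_n \leq o_n(1) + C\eta\, Y_n^{4/(d-4)}$.

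The main obstacle is closing this inequality when $d > 8$, where $\alpha := 4/(d-4) \in (1/2, 1)$ and the sub-linear self-dependence prevents naive absorption: the scalar inequality $Y \leq A + BY^\alpha$ with $A \to 0$ does \emph{not} force $Y \to 0$ when $\alpha < 1$ (the positive solution tends to $B^{1/(1-\alpha)}$ rather than $0$). For $d = 8$ we have $\alpha = 1$ and ordinary absorption suffices. For $8 < d < 12$ one must invoke the exotic Strichartz spaces advertised in the abstract: replace the first-order time derivative $\partial_t$ by a fractional time derivative $|\partial_t|^\sigma$ of order $\sigma \in (1/2, \alpha)$ (such a $\sigma$ exists precisely when $\alpha > 1/2$, i.e. $d < 12$), use the time Littlewood--Paley framework of Section 2 to make sense of the corresponding exotic Strichartz/Besov norms $\||\partial_t|^\sigma \cdot\|_{L^\gamma L^\rho}$, and rerun the Strichartz/chain-rule argument with a fractional chain rule for $g$ (available because $g'$ is H\"older of exponent $\alpha > \sigma$). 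This produces an analogous estimate that is \emph{linear} in the exotic norm of $v_n$ and hence absorbable. Combined with the uniform a priori bound $Y_n \leq 2C_{\gamma,\rho^*}\eta$ from \eqref{eq14} and the already available convergence $v_n \to 0$ in every admissible $L^q L^r$, this forces $Y_n \to 0$ and proves the claim.
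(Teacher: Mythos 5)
You correctly diagnose the obstruction — for $d>8$ the inequality $Y_n \le o_n(1) + C\eta^{4/(d-4)}Y_n + C\eta Y_n^{4/(d-4)}$ has a sub-linear term that cannot be absorbed — and you correctly name the cure the paper uses: exotic Strichartz/Besov norms built from fractional time derivatives of order strictly between $0$ and $\tfrac{4}{d-4}$, a window that is usable only for $d<12$. This is indeed the same idea as the paper's proof of the Claim. (As an aside, your preliminary bootstrap in $L^\gamma L^\rho$-type norms is essentially the paper's \emph{post-Claim} argument \eqref{eq15}--\eqref{eq17}, not the Claim itself, which is proved entirely inside the exotic framework and then fed into that bootstrap.)

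There are, however, two concrete gaps. First, the norm you propose, $\||\partial_t|^\sigma u\|_{L^\gamma L^\rho}$, is not at the $\dot H^2$-critical scaling: under $u(t,x)\mapsto u(\lambda^2 t,\lambda x)$ it scales as $\lambda^{2\sigma-2/\gamma-d/\rho}$, whereas $\|\Delta u\|_{L^\gamma L^\rho}$ scales as $\lambda^{2-2/\gamma-d/\rho}$, and these differ for $\sigma<1$. To compensate the lost $2(1-\sigma)$ derivatives you must shift the spatial Lebesgue exponent (and hence the time exponent), as the paper does by taking $U=\dot B^{s}_{a,2}(\mathbb{R},L^{\tau}_x)$, $V=\dot B^{s}_{\tilde a',2}(\mathbb{R},L^{\tau'}_x)$ with $s=\tfrac{20}{3(d+1)}$, $\tau=\tfrac{2(d+1)}{d-1}$, $a=\tfrac{6(d+1)}{34-3d}$, obtained by a Sobolev trade from the admissible pair $(a,\tilde\tau)$; one then checks $2s-2/a-d/\tau=2-2/\gamma-d/\rho$. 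Second, ``rerun the Strichartz/chain-rule argument with a fractional chain rule for $g$'' hides the essential technical content: the paper does not invoke a black-box fractional chain rule, but proves the bespoke product estimate $\|g_z(v)u\|_V\le C\|v\|_{\dot W}^{4/(d-4)}\|u\|_U$ via a frequency-localized bound in the \emph{time} Littlewood--Paley blocks followed by Schur's test, where the off-diagonal gains $2^{s(k-l)}$ and $2^{(l-k)(\frac{4}{d-4}-s)}$ require precisely $0<s<\tfrac{4}{d-4}$ — this is where $d<12$ actually enters — and one still needs the linear estimate \eqref{linear} and a final interpolation with \eqref{eq14} to return to $L^\gamma L^{\rho^*}$. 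These steps would need to be carried out for your proposal to be a proof.
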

\begin{proof}
We will prove our claim in the spirit of Tao and Visan \cite{T-V}. We have to avoid taking full time derivative, since this is what turns the nonlinearity from Lipschitz into just H\"{o}lder continuous of order $\frac{4}{d-4}$, we need to take fewer than $\frac{4}{d-4}$ but more than $1/2$ time derivatives instead. To this end, we will use the norms $U=U(\mathbb{R}\times\mathbb{R}^{d})$ and $V=V(\mathbb{R}\times\mathbb{R}^{d})$ defined by
\begin{equation}\label{exotic norms 1}
    \|u\|_{U}=\{\sum_{k\in\mathbb{Z}}2^{2sk}\|\Delta_{k}(\|u\|_{L_{x}^{\tau}(\mathbb{R}^{d})})\|_{L_{t}^{a}(\mathbb{R})}^{2}\}^{1/2},
\end{equation}
\begin{equation}\label{exotic norms 2}
    \|f\|_{V}=\{\sum_{k\in\mathbb{Z}}2^{2sk}\|\Delta_{k}(\|f\|_{L_{x}^{\tau'}(\mathbb{R}^{d})})\|_{L_{t}^{\tilde{a}'}(\mathbb{R})}^{2}\}^{1/2},
\end{equation}
which require roughly $s$ degrees of differentiability, where $s=\frac{20}{3(d+1)}$, $\tau=\frac{2(d+1)}{d-1}$, $a=\frac{6(d+1)}{34-3d}$ and $\tilde{a}'=\frac{6(d+1)}{40-3d}$. Choosing a particular admissible pair $(q_{1},r_{1})=(\frac{4(d+1)}{d-4},\frac{2d(d+1)}{d^{2}+4})$, we will also need the Strichartz space $\dot{W}=\dot{W}(\mathbb{R}\times\mathbb{R}^{d})$ defined as the closure of the test functions under the norm
\begin{equation}\label{Strichartz norms}
    \|u\|_{\dot{W}(\mathbb{R}\times\mathbb{R}^{d})}:=\|u\|_{\dot{X}_{q_{1},r_{1}}(\mathbb{R}\times\mathbb{R}^{d})}.
\end{equation}

As the Littlewood-Paley operators on time commute with time derivatives of arbitrary order, similar to the Strichartz's estimates for non-admissible pairs(Lemma \ref{lem9}), we can deduce from dispersive estimates and Riesz potential inequalities(refer to \cite{S,T2}) that, there exists a constant $C>0$ such that for any $f\in V$,
\begin{equation*}
    \|\Delta_{k}(\|\int^{t}_{t_{0}}e^{i(t-s)\Delta}f(s)ds\|_{L_{x}^{\tau}(\mathbb{R}^{d})})\|_{L_{t}^{a}(\mathbb{R})}\leq C\|\Delta_{k}(\|f\|_{L_{x}^{\tau'}(\mathbb{R}^{d})})\|_{L_{t}^{\tilde{a}'}(\mathbb{R})}.
\end{equation*}
Squaring the above inequality, multiplying by $2^{2sk}$, and summing over all integer $k$'s, we obtain the exotic Strichartz estimate, that is, for any $f\in V$,
\begin{equation}\label{exotic Strichartz estimate}
    \|\int^{t}_{t_{0}}e^{i(t-s)\Delta}f(s)ds\|_{U}\leq C\|f\|_{V}.
\end{equation}

Note that $\|\cdot\|_{U}\sim\|\cdot\|_{\dot{B}^{s}_{a,2}(\mathbb{R},L_{x}^{\tau}(\mathbb{R}^{d}))}$, $\|\cdot\|_{V}\sim\|\cdot\|_{\dot{B}^{s}_{\tilde{a}',2}(\mathbb{R},L_{x}^{\tau'}(\mathbb{R}^{d}))}$ and $0<s=\frac{20}{3(d+1)}<\frac{4}{d-4}$, thus similar to the nonlinear estimates $$\|g_{z}(v)u\|_{\dot{B}^{s}_{\tilde{a}',2}}\leq C\|v\|^{\frac{4}{d-4}}_{L^{q_{1}}}\|u\|_{\dot{B}^{s}_{a,2}}$$ 
obtained in \cite{C-D-Y,C-F-H}, we would expect a nonlinear estimate as follow:
\begin{equation}\label{expected estimate}
    \|g_{z}(v)u\|_{V}\leq C\|v\|_{\dot{W}(\mathbb{R}\times\mathbb{R}^{d})}^{\frac{4}{d-4}}\|u\|_{U},
\end{equation}
whenever the right-hand side makes sense.
 
To show \eqref{expected estimate}, first we prove the following frequency-localized nonlinear estimate for any $k\in\mathbb{Z}$,
\begin{eqnarray}\label{f-l nonlinear}
&&\|\Delta_{k}(\|g_{z}(v)u\|_{L_{x}^{\tau'}(\mathbb{R}^{d})})\|_{L^{\tilde{a}'}_{t}(\mathbb{R})}\\
\nonumber &\leq&C\|v\|^{\frac{4}{d-4}}_{\dot{W}(\mathbb{R}\times\mathbb{R}^{d})}\sum_{l\in\mathbb{Z}}\min\{1,2^{\frac{4(l-k)}{d-4}}\}
\|\Delta_{l}(\|u\|_{L_{x}^{\tau}(\mathbb{R}^{d})})\|_{L^{a}_{t}(\mathbb{R})}.
\end{eqnarray}

Indeed, by scaling, we only need to show (\ref{f-l nonlinear}) for $k=0$. By Minkowski's and H\"{o}lder's inequality, we have
\begin{equation}\label{F-L11}
\|\Delta_{0}(\|g_{z}(v)u\|_{L_{x}^{\tau'}})\|_{L^{\tilde{a}'}_{t}}\leq \sum_{l\in\mathbb{Z}}\|\Delta_{0}\{\|g_{z}(v)\|_{L_{x}^{\frac{(d-4)r_{1}^{\ast}}{4}}}\Delta_{l}(\|u\|_{L_{x}^{\tau}})\}\|_{L^{\tilde{a}'}_{t}}.
\end{equation}
To bound the right-hand side of (\ref{F-L11}), on one hand, by using (\ref{nonlinearity1}), (\ref{Strichartz norms}) and H\"{o}lder's inequality, note that $\dot{H}^{2,r_{1}}(\mathbb{R}^{d})\hookrightarrow L^{r_{1}^{\ast}}(\mathbb{R}^{d})$, we get
\begin{equation}\label{F-L12}
\sum_{l\geq-2}\|\Delta_{0}\{\|g_{z}(v)\|_{L_{x}^{\frac{(d-4)r_{1}^{\ast}}{4}}}\Delta_{l}(\|u\|_{L_{x}^{\tau}})\}\|_{L^{\tilde{a}'}_{t}}
\leq C\|v\|^{\frac{4}{d-4}}_{\dot{W}}\sum_{l\geq-2}\|\Delta_{l}(\|u\|_{L_{x}^{\tau}})\|_{L^{a}_{t}};
\end{equation}
on the other hand, by H\"{o}lder's and Bernstein estimates(see Lemma \ref{bernstein}), we have
\begin{eqnarray}\label{F-L13}
&&\sum_{l\leq-3}\|\Delta_{0}\{\|g_{z}(v)\|_{L_{x}^{\frac{(d-4)r_{1}^{\ast}}{4}}(\mathbb{R}^{d})}\Delta_{l}(\|u\|_{L_{x}^{\tau}})\}\|_{L^{\tilde{a}'}_{t}(\mathbb{R})}\\
\nonumber &\leq&C\|S_{\geq-1}(\|g_{z}(v)\|_{L_{x}^{\frac{(d-4)r_{1}^{\ast}}{4}}(\mathbb{R}^{d})})\|_{L^{q}_{t}(\mathbb{R})}\sum_{l\leq-3}\|\Delta_{l}(\|u\|_{L_{x}^{\tau}(\mathbb{R}^{d})})\|_{L^{p}_{t}(\mathbb{R})}\\
\nonumber &\leq&C\|g_{z}(v)\|_{L^{\frac{(d-4)q_{1}}{4}}_{t}(\mathbb{R},L_{x}^{\frac{(d-4)r_{1}^{\ast}}{4}}(\mathbb{R}^{d}))}\sum_{l\leq-3}2^{\frac{4l}{d-4}}
\|\Delta_{l}(\|u\|_{L_{x}^{\tau}(\mathbb{R}^{d})})\|_{L^{a}_{t}(\mathbb{R})},
\end{eqnarray}
where $\frac{1}{p}=\frac{34-3d}{6(d+1)}-\frac{4}{d-4}$ and $q=\frac{(d+1)(d-4)}{5d}$. By \eqref{nonlinearity1}, H\"{o}lder's inequality and Sobolev embedding $\dot{H}^{2,r_{1}}\hookrightarrow L^{r_{1}^{\ast}}$, we get
\begin{equation}\label{F-L14}
    \|g_{z}(v)\|_{L^{\frac{(d-4)q_{1}}{4}}_{t}(\mathbb{R},L_{x}^{\frac{(d-4)r_{1}^{\ast}}{4}}(\mathbb{R}^{d}))}\leq C\|v\|^{\frac{4}{d-4}}_{\dot{W}(\mathbb{R}\times\mathbb{R}^{d})}.
\end{equation}
By combining the estimates \eqref{F-L11}-\eqref{F-L14}, we infer that
\begin{eqnarray}\label{F-L15}
&&\|\Delta_{0}(\|g_{z}(v)u\|_{L_{x}^{\tau'}(\mathbb{R}^{d})})\|_{L^{\tilde{a}'}_{t}(\mathbb{R})}\\
\nonumber &\leq&C\|v\|^{\frac{4}{d-4}}_{\dot{W}(\mathbb{R}\times\mathbb{R}^{d})}\sum_{l\in\mathbb{Z}}\min\{1,2^{\frac{4l}{d-4}}\}
\|\Delta_{l}(\|u\|_{L_{x}^{\tau}(\mathbb{R}^{d})})\|_{L^{a}_{t}(\mathbb{R})}.
\end{eqnarray}
It follows from \eqref{F-L15} that \eqref{f-l nonlinear} holds for $k=0$, then by scaling, we know it holds for any $k\in\mathbb{Z}$, this completes the proof of \eqref{f-l nonlinear}.

We can rewrite \eqref{f-l nonlinear} as
\begin{eqnarray}\label{re f-l nonlinear}
&&2^{sk}\|\Delta_{k}(\|g_{z}(v)u\|_{L_{x}^{\tau'}(\mathbb{R}^{d})})\|_{L^{\tilde{a}'}_{t}(\mathbb{R})}\\
\nonumber &\leq&C\|v\|^{\frac{4}{d-4}}_{\dot{W}(\mathbb{R}\times\mathbb{R}^{d})}\sum_{l\in\mathbb{Z}}\min\{2^{s(k-l)},2^{(l-k)(\frac{4}{d-4}-s)}\}
2^{sl}\|\Delta_{l}(\|u\|_{L_{x}^{\tau}(\mathbb{R}^{d})})\|_{L^{a}_{t}(\mathbb{R})}.
\end{eqnarray}
for any $k\in\mathbb{Z}$, where $s=\frac{20}{3(d+1)}$. Therefore, we deduce from \eqref{exotic norms 1}, \eqref{exotic norms 2}, \eqref{re f-l nonlinear} and Schur's test the expected nonlinear estimate as follow:
\begin{equation}\label{nonlinear estimate}
    \|g_{z}(v)u\|_{V}\leq C\|v\|_{\dot{W}(\mathbb{R}\times\mathbb{R}^{d})}^{\frac{4}{d-4}}\|u\|_{U},
\end{equation}
whenever the right-hand side makes sense. A similar statement holds with $g_{z}$ replaced by $g_{\bar{z}}$, and the corresponding proof is identical.

From Proposition 2.5 in \cite{P}, we have the following linear estimate:
\begin{equation}\label{linear}
    \|e^{it\Delta}\phi\|_{\dot{B}^{s_{1}}_{q,2}(\mathbb{R},\dot{H}^{s_{2},r}(\mathbb{R}^{d}))}\leq C\|\phi\|_{\dot{H}^{2s_{1}+s_{2},2}(\mathbb{R}^{d})},
\end{equation}
where $d\geq3$, $0\leq s_{1}<1$, $s_{2}\geq0$ and $(q,r)$ is an arbitrary admissible pair. Now let us define $\tilde{\tau}:=\frac{6d(d+1)}{3d(d+3)-68}$. Note that $(a,\tilde{\tau})$ is an admissible pair and $\dot{H}^{2(1-s),\tilde{\tau}}(\mathbb{R}^{d})\hookrightarrow L^{\tau}(\mathbb{R}^{d})$, we deduce from exotic Strichartz estimate (\ref{exotic Strichartz estimate}) combined with nonlinear estimate (\ref{nonlinear estimate}) that
\begin{eqnarray*}
% \nonumber to remove numbering (before each equation)
  \|u_{n}-u\|_{U}&\leq& C\|e^{it\Delta}(\phi-\phi_{n})\|_{\dot{B}^{s}_{a,2}(\mathbb{R},\dot{H}^{2(1-s),\tilde{\tau}}(\mathbb{R}^{d}))}
  +C\|g(u_{n})-g(u)\|_{V}\\
&\leq&C\|\phi_{n}-\phi\|_{\dot{H}^{2}(\mathbb{R}^{d})}+C(\|u_{n}\|^{\frac{4}{d-4}}_{\dot{W}}+\|u\|^{\frac{4}{d-4}}_{\dot{W}})\|u_{n}-u\|_{U}.
\end{eqnarray*}
Thus by (\ref{eq14}) and \eqref{Strichartz norms}, we have for all sufficiently large $n\geq n_{0}$,
\[\|u_{n}-u\|_{U}\leq C\|\phi_{n}-\phi\|_{\dot{H}^{2}(\mathbb{R}^{d})}+C\eta^{\frac{4}{d-4}}\|u_{n}-u\|_{U}.\]
Therefore, by possibly choosing $\eta$ smaller such that
\begin{equation}\label{delta2}
    C\eta^{\frac{4}{d-4}}<1/2,
\end{equation}
we can deduce that as $n\rightarrow \infty$, $\|u_{n}-u\|_{U}\rightarrow 0$.

Since $\|\cdot\|_{U}\sim\|\cdot\|_{\dot{B}^{s}_{a,2}(\mathbb{R},L_{x}^{\tau}(\mathbb{R}^{d}))}$, we deduce from \eqref{eq14} and interpolation inequalities that
\begin{equation}\label{convergence}
    \lim_{n\rightarrow\infty}\|u_{n}-u\|_{L^{\gamma}(\mathbb{R},L^{\rho^{\ast}}(\mathbb{R}^{d}))}=0,
\end{equation}
this concludes our proof.
\end{proof}
Note that we have the formula
\[\partial_{t}(g(u_{n})-g(u))=g'(u_{n})\partial_{t}(u_{n}-u)+(g'(u_{n})-g'(u))\partial_{t}u,\] 
where $g'$ denotes the complex partial derivatives $g_{z}$ and $g_{\bar{z}}$, so it follows from \eqref{nonlinearity1}, \eqref{nonlinearity2} and H\"{o}lder's inequality that
\begin{equation}\label{eq15}
    \|\partial_{t}(g(u_{n})-g(u))\|_{L_{t}^{\gamma'}L_{x}^{\rho'}}\leq C\|u_{n}\|^{\frac{4}{d-4}}_{\dot{X}_{\gamma,\rho}}\|u_{n}-u\|_{\dot{X}_{\gamma,\rho}}+C\|u\|_{\dot{X}_{\gamma,\rho}}
    \|u_{n}-u\|^{\frac{4}{d-4}}_{L_{t}^{\gamma}L_{x}^{\rho^{\ast}}},
\end{equation}
where all space-time norms are on $\mathbb{R}\times\mathbb{R}^{d}$. Therefore, by Strichartz's estimates, \eqref{eq2}-\eqref{eq13} and \eqref{eq15}, we have
\begin{equation}\label{eq16}
    \|u_{n}-u\|_{L^{q}(\mathbb{R},L^{r}(\mathbb{R}^{d}))}\leq C\|\phi_{n}-\phi\|_{L^{2}(\mathbb{R}^{d})}+C\eta^{\frac{4}{d-4}}\|u_{n}-u\|_{L^{\gamma}(\mathbb{R},L^{\rho}(\mathbb{R}^{d}))};
\end{equation}
\begin{eqnarray}\label{eq17}
% \nonumber to remove numbering (before each equation)
&&\|u_{n}-u\|_{\dot{X}_{q,r}}\leq C\|\phi_{n}-\phi\|_{\dot{H}^{2}(\mathbb{R}^{d})}+\|g(u_{n})-g(u)\|_{L_{t}^{q}L_{x}^{r}(\mathbb{R}\times\mathbb{R}^{d})}\\
\nonumber &&+C\|g(\phi_{n})-g(\phi)\|_{L^{2}(\mathbb{R}^{d})}+C\|\partial_{t}(g(u_{n})-g(u))\|_{L_{t}^{\gamma'}L_{x}^{\rho'}(\mathbb{R}\times\mathbb{R}^{d})}\\
\nonumber &\leq&C(1+\eta^{\frac{4}{d-4}})\|\phi_{n}-\phi\|_{\dot{H}^{2}(\mathbb{R}^{d})}+C\eta^{\frac{4}{d-4}}(\|u_{n}-u\|_{\dot{X}_{q,r}}
+\|u_{n}-u\|_{\dot{X}_{\gamma,\rho}})\\
\nonumber &&+C\eta\|u_{n}-u\|^{\frac{4}{d-4}}_{L_{t}^{\gamma}L_{x}^{\rho^{\ast}}(\mathbb{R}\times\mathbb{R}^{d})}.
\end{eqnarray}
Now let constant $K_{0}$ be larger than the constant $C$ appearing in the above three estimates \eqref{eq16} and \eqref{eq17} for the particular choice of the admissible pair $(q,r)=(\gamma,\rho)$. Then by possibly choosing $\eta$ smaller such that 
\begin{equation}\label{eq18}
    4K_{0}\eta^{\frac{4}{d-4}}<1,
\end{equation}
we can deduce from $\phi_{n}\rightarrow\phi$ in $H^{2}(\mathbb{R}^{d})$ and Claim \ref{claim} that \[\|u_{n}-u\|_{L^{\gamma}(\mathbb{R},L^{\rho}(\mathbb{R}^{d}))}\rightarrow0 \,\,\,\,\,\, and \,\,\,\,\,\, \|u_{n}-u\|_{\dot{X}_{\gamma,\rho}(\mathbb{R}\times\mathbb{R}^{d})}\rightarrow0,\]
as $n\rightarrow\infty$. The convergence for arbitrary admissible pair $(q,r)$ follows from Strichartz's estimates, or more precisely, from \eqref{eq16} and \eqref{eq17}.

In a word, we have proved for $8\leq d<12$ that, if $\phi\in H^{2}(\mathbb{R}^{d})$ with $\|\phi\|_{\dot{H}^{2}}$ sufficiently small and if $\phi_{n}\rightarrow \phi$ in $H^{2}(\mathbb{R}^{d})$, $u_{n}$ denotes the solution of \eqref{eq1} with the initial value $\phi_{n}$, then $u_{n}$ is global for $n$ large enough, furthermore, $u_{n}\rightarrow u$ in $L^{q}(\mathbb{R},H^{2,r}(\mathbb{R}^{d}))$ and $(u_{n})_{t}\rightarrow u_{t}$ in $L^{q}(\mathbb{R},L^{r}(\mathbb{R}^{d}))$ as $n\rightarrow\infty$ for every admissible pair $(q,r)$. In particular, $u_{n}\rightarrow u$ in $C(\mathbb{R},H^{2}(\mathbb{R}^{d}))$.\\

This completes the proof of Theorem \ref{th1}.\\

{\bf Acknowledgements:} The author's research was done during his visit at University of California, Berkeley, which is supported by a Young Researcher's Fellowship of AMSS, Chinese Academy of Sciences. \\

\end{document}